\date{This manuscript was completed on 21 January 2006}
\date{}
\theoremstyle{plain}
\newtheorem{thm}{Theorem}
\newtheorem{lem}{Lemma}
\theoremstyle{remark}
\newtheorem{rem}{Remark}
\newcommand{\abs}[1]{\left\vert#1\right\vert}
\DeclareMathOperator{\td}{d\mspace{-2mu}}
\newcommand{\tn}{\mathbb{N}}
\begin{document}

\title[A class of completely monotonic functions and applications]
{A class of completely monotonic functions involving divided differences of the psi and polygamma functions and some applications}

\author[F. Qi]{Feng Qi}
\address[F. Qi]{Research Institute of Mathematical Inequality Theory, Henan Polytechnic University, Jiaozuo City, Henan Province, 454010, China}
\email{\href{mailto: F. Qi <qifeng618@gmail.com>}{qifeng618@gmail.com}, \href{mailto: F. Qi <qifeng618@hotmail.com>}{qifeng618@hotmail.com}, \href{mailto: F. Qi <qifeng618@qq.com>}{qifeng618@qq.com}}
\urladdr{\url{http://qifeng618.spaces.live.com}}

\author[B.-N. Guo]{Bai-Ni Guo}
\address[B.-N. Guo]{School of Mathematics and Informatics, Henan Polytechnic University, Jiaozuo City, Henan Province, 454010, China}
\email{\href{mailto: B.-N. Guo <bai.ni.guo@gmail.com>}{bai.ni.guo@gmail.com}, \href{mailto: B.-N. Guo <bai.ni.guo@hotmail.com>}{bai.ni.guo@hotmail.com}}

\begin{abstract}
A class of functions involving the divided differences of the psi function and the polygamma functions and originating from Kershaw's double inequality are proved to be completely monotonic. As applications of these results, the monotonicity and convexity of a function involving ratio of two gamma functions and originating from establishment of the best upper and lower bounds in Kershaw's double inequality are derived, two sharp double inequalities involving ratios of double factorials are recovered, the probability integral or error function is estimated, a double inequality for ratio of the volumes of the unit balls in $\mathbb{R}^{n-1}$ and $\mathbb{R}^n$ respectively is deduced, and a symmetrical upper and lower bounds for the gamma function in terms of the psi function is generalized.
\end{abstract}

\subjclass[2000]{05A10, 26D15, 26A48, 26A51, 33B15, 33B20, 65R10}

\keywords{completely monotonic function, logarithmically completely monotonic function, divided difference, psi function, polygamma function, Kershaw's inequality, probability integral, error function, double factorial, ratio of the volumes of the unit balls in $\mathbb{R}^{n}$, monotonicity, convexity, inequality, generalization, application}

\thanks{This paper was typeset using \AmS-\LaTeX}

\maketitle

\section{Introduction}

Recall~\cite[Chapter~XIII]{mpf-1993} and~\cite[Chapter~IV]{widder} that a function $f$ is said to be completely monotonic on an interval $I$ if $f$ has derivatives of all orders on $I$ and
\begin{equation}
(-1)^{n}f^{(n)}(x)\ge0
\end{equation}
for $x \in I$ and $n \geq0$. The famous Bernstein's Theorem in~\cite[p.~160, Theorem~12a]{widder} states that a function $f$ is completely monotonic on $[0,\infty)$ if and only if
\begin{equation}\label{converge}
f(x)=\int_0^\infty e^{-xs}\td\mu(s),
\end{equation}
where $\mu$ is a nonnegative measure on $[0,\infty)$ such that the integral~\eqref{converge} converges for all $x>0$. This expresses that a completely monotonic function $f$ on $[0,\infty)$ is a Laplace transform of the measure $\mu$.
\par
Recall also~\cite{Atanassov, minus-one, auscm-rgmia} that a positive function $f$ is called logarithmically completely monotonic on an interval $I$ if $f$ has derivatives of all orders on $I$ and its logarithm $\ln f$ satisfies
\begin{equation}
(-1)^k[\ln f(x)]^{(k)}\ge0
\end{equation}
for all $k\in\tn$ on $I$. It was proved explicitly in~\cite{CBerg, compmon2, absolute-mon.tex, minus-one, schur-complete} by different approaches that any logarithmically completely monotonic function must be completely monotonic, but not conversely. It was pointed out in~\cite[Theorem~1.1]{CBerg} and~\cite{grin-ismail,auscm} that the logarithmically completely monotonic functions on $[0,\infty)$ are those completely monotonic functions on $[0,\infty)$ for which the representing measure $\mu$ in~\eqref{converge} is infinitely divisible in the convolution sense: For each $n\in\mathbb{N}$ there exists a positive measure $\nu$ on $[0,\infty)$ with $n$-th convolution power equal to $\mu$.
\par
It is well-known that the classical Euler gamma function
\begin{equation}
\Gamma(x)=\int^\infty_0t^{x-1} e^{-t}\td t
\end{equation}
for $x>0$, its logarithmic derivative, denoted by $\psi(x)=\frac{\Gamma'(x)}{\Gamma(x)}$, and the polygamma functions $\psi^{(i)}(x)$ for $i\in\mathbb{N}$ are several of the most important special functions and have much extensive applications in many branches such as statistics, probability, number theory, theory of $0$-$1$ matrices, graph theory, combinatorics, physics, engineering, and other mathematical sciences.
\par
The ratio $\frac{\Gamma(x+p)}{\Gamma(x+q)}$ for $x+p>0$ and $x+q>0$ of two gamma functions, called Wallis function or ratio in the literature, has been investigated since $1948$ in~\cite{wendel} at least. Now there exist a lot of conclusions on Wallis ratio, its variants, generalizations and applications, for example,~\cite{ball-volume-rn, Bustoz-and-Ismail, dutka, gaut, grin-ismail, laforgia-mc-1984, ratio-gamma-polynomial.tex, ratio-gamma-polynomial.tex-rgmia, sandor-gamma-3.tex, sandor-gamma-3.tex-rgmia, notes-best-simple-equiv.tex-RGMIA, notes-best-simple-equiv.tex, sandor-gamma-2.tex, sandor-gamma-2.tex-rgmia, gamma-batir.tex, sandor-gamma-3-note.tex-final, sandor-gamma-3-note.tex} and related references therein.
\par
In~\cite{kershaw}, D. Kershaw proved a double inequality
\begin{equation}\label{gki1}
\bigg(x+\frac{s}2\bigg)^{1-s}< \frac{\Gamma(x+1)}{\Gamma(x+s)}
<\Bigg(x-\frac12+\sqrt{s+\frac14}\,\Bigg)^{1-s}
\end{equation}
for $0<s<1$ and $x\ge1$. It is clear that the inequality~\eqref{gki1} can be
rearranged as
\begin{equation}\label{gki1-rewr}
\frac{s}2<\bigg[\frac{\Gamma(x+1)}{\Gamma(x+s)}\bigg]^{1/({1-s})}-x
<\sqrt{s+\frac14}\,-\frac12.
\end{equation}
This suggests us to introduce a function
\begin{equation}
z_{s,t}(x)=\begin{cases}
\bigg[\dfrac{\Gamma(x+t)}{\Gamma(x+s)}\bigg]^{1/(t-s)}-x,&s\ne t\\
e^{\psi(x+s)}-x,&s=t
\end{cases}
\end{equation}
on $x\in(-\alpha,\infty)$ for real numbers $s$ and $t$ and $\alpha=\min\{s,t\}$.
\par
In~\cite{201-05-JIPAM, egp, notes-best-simple-equiv.tex-RGMIA, notes-best-simple-equiv.tex, notes-best-simple-open.tex, notes-best-simple.tex, notes-best.tex, notes-best.tex-rgmia}, the monotonic and convex properties of $z_{s,t}(x)$ were established by using Laplace transform and other complicated techniques. Their basic calculation is as follows:
\begin{gather}
z'_{s,t}(x)=[z_{s,t}(x)+x]\frac{\psi(x+t)-\psi(x+s)}{t-s}-1, \label{zi}\\
z''_{s,t}(x)=[z_{s,t}(x)+x]\bigg\{\bigg[\frac{\psi(x+t)
-\psi(x+s)}{t-s}\bigg]^2 +\frac{\psi'(x+t)-\psi'(x+s)}{t-s}\bigg\}\label{z''-delta}\\
=\frac{z_{s,t}(x)+x}{(t-s)^2}\bigg\{{[\psi(x+t) -\psi(x+s)]^2}
+(t-s)[\psi'(x+t)-\psi'(x+s)]\bigg\}.\label{z''-theta}
\end{gather}
This further suggests us to consider the following two functions:
\begin{equation}\label{Delta-dfn}
\Delta_{s,t}(x)=\begin{cases}\bigg[\dfrac{\psi(x+t) -\psi(x+s)}{t-s}\bigg]^2
+\dfrac{\psi'(x+t)-\psi'(x+s)}{t-s},&s\ne t\\
[\psi'(x+s)]^2+\psi''(x+s),&s=t
\end{cases}
\end{equation}
and
\begin{equation}\label{Theta-dfn}
\Theta_{s,t}(x)=[{\psi(x+t) -\psi(x+s)}]^2 +(t-s)[{\psi'(x+t)-\psi'(x+s)}]
\end{equation}
on $x\in(-\alpha,\infty)$ for real numbers $s$ and $t$ and $\alpha=\min\{s,t\}$.
\par
In~\cite[p.~208]{forum-alzer}, \cite[Lemma~1.1]{batir-new} and~\cite[Lemma~1.1]{batir-new-rgmia}, the inequality
\begin{equation}\label{positivity}
\Delta_{0,0}(x)=[\psi'(x)]^2+\psi''(x)>0
\end{equation}
on $(0,\infty)$ was verified. In~\cite{alzer-grinshpan, batir-new, batir-new-rgmia}, this inequality was applied to provide some symmetrical upper and lower bounds for $\Gamma(x)$ in terms of $\psi(x)$ as follows:
\begin{equation}\label{batir-alzer-grinshpan-ineq}
\exp\bigl\{\alpha\bigl[e^{\psi(x)}(\psi(x)-1)+1\bigr]\bigr\}\le \frac{\Gamma(x)}{\Gamma(x^\ast)} \le\exp\bigl\{\beta\bigl[e^{\psi(x)}(\psi(x)-1)+1\bigr]\bigr\},
\end{equation}
where $x^\ast=1.4616\dotsm$ denotes the only positive zero of $\psi(x)$, and $\alpha$ and $\beta$ are real constants.
\par
The first aim of this paper is to present the completely monotonic property of the functions $\Delta_{s,t}(x)$ and $\Theta_{s,t}(x)$ on $(-\alpha,\infty)$, which implies the positivity of the function $\Delta_{0,0}(x)$ defined by~\eqref{positivity}.

\begin{thm}\label{byproduct-1}
The functions $\Delta_{s,t}(x)$ for $\abs{t-s}<1$ and $-\Delta_{s,t}(x)$ for $\abs{t-s}>1$ are completely monotonic on $x\in(-\alpha,\infty)$. So are the functions $\Theta_{s,t}(x)$ for $\abs{t-s}<1$ and $-\Theta_{s,t}(x)$ for $\abs{t-s}>1$ on $x\in(-\alpha,\infty)$.
\end{thm}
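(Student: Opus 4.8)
The plan is to reduce everything to a single nonnegativity statement about the density in a Laplace representation. First I would record the identity $\Theta_{s,t}(x)=(t-s)^2\Delta_{s,t}(x)$, which is visible on comparing \eqref{z''-delta} with \eqref{z''-theta}; since $(t-s)^2>0$ this makes the two assertions equivalent, so it suffices to treat $\Theta_{s,t}$. Both functions are symmetric in $s$ and $t$, so I may assume $t\ge s$ and set $\tau=t-s\ge0$. Replacing $x$ by $y-s$ (so that $x>-\alpha=-s$ becomes $y\in(0,\infty)$ and complete monotonicity is preserved by the translation) turns $\Theta_{s,t}$ into $\Theta(y)=h(y)^2+\tau h'(y)$, where $h(y)=\psi(y+\tau)-\psi(y)$.

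Next I would insert the classical representations $\psi(y+\tau)-\psi(y)=\int_0^\infty e^{-yu}\frac{1-e^{-\tau u}}{1-e^{-u}}\td u$ and $\psi'(y)=\int_0^\infty\frac{u e^{-yu}}{1-e^{-u}}\td u$, which give $h(y)=\int_0^\infty e^{-yu}p(u)\td u$ and $h'(y)=-\int_0^\infty u e^{-yu}p(u)\td u$ with kernel $p(u)=\frac{1-e^{-\tau u}}{1-e^{-u}}$. Using that a product of Laplace transforms is the transform of the convolution, I obtain, for $y>0$,
\[
\Theta(y)=\int_0^\infty e^{-yw}\bigl[(p*p)(w)-\tau w\,p(w)\bigr]\td w,\qquad (p*p)(w)=\int_0^w p(u)p(w-u)\td u.
\]
A Laplace representation with a nonnegative (resp.\ nonpositive) density is completely monotonic by the half of Bernstein's theorem recalled in the introduction (differentiate under the integral sign), so the theorem reduces to signing the density $\phi(w)=(p*p)(w)-\tau w\,p(w)$: nonnegative for $\tau<1$, nonpositive for $\tau>1$.

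The heart of the argument is this signing. Since $p(0^+)=\tau$, I would write $\tau w\,p(w)=\int_0^w p(0)p(w)\td u$, so that
\[
\phi(w)=\int_0^w\bigl[p(u)p(w-u)-p(0)p(w)\bigr]\td u,
\]
and it suffices to control the sign of $p(u)p(w-u)-p(0)p(w)$ on $0\le u\le w$. This is a statement about $\ln p$: if $\ln p$ is concave, then writing $u$ and $w-u$ each as a convex combination of $0$ and $w$ and adding the two concavity inequalities yields $p(u)p(w-u)\ge p(0)p(w)$, while log-convexity reverses it. Thus the whole theorem reduces to showing that $p$ is logarithmically concave when $\tau<1$ and logarithmically convex when $\tau>1$.

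This last point is where the real computation lies and is the step I expect to be the main obstacle. A direct differentiation gives
\[
(\ln p)''(u)=\frac{e^{u}}{(e^{u}-1)^2}-\frac{\tau^2 e^{\tau u}}{(e^{\tau u}-1)^2}=F(1)-F(\tau),\qquad F(c)=\frac{c^2}{4\sinh^2(cu/2)},
\]
so everything hinges on the monotonicity of $c\mapsto F(c)$. Writing $F(c)=\frac{1}{u^2}\bigl(\tfrac{cu/2}{\sinh(cu/2)}\bigr)^2$ and using that $\xi\mapsto \xi/\sinh\xi$ is strictly decreasing on $(0,\infty)$ shows $F$ is strictly decreasing; hence $(\ln p)''<0$ for $\tau<1$ and $(\ln p)''>0$ for $\tau>1$, exactly as required. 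Finally the degenerate case $s=t$ (i.e.\ $\tau=0$) I would treat either by continuity in $\tau$ or directly with the kernel $g(u)=\frac{u}{1-e^{-u}}$ in place of $p$, for which the same computation gives $(\ln g)''=F(1)-1/u^2<0$ since $\sinh(u/2)>u/2$, so that $\Delta_{s,s}(x)=[\psi'(x+s)]^2+\psi''(x+s)$ is completely monotonic and the positivity \eqref{positivity} is recovered as the special case $s=0$.
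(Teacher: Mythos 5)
Your proof is correct, but it takes a genuinely different route from the paper's. The paper never exhibits a Bernstein measure: it uses the recurrence~\eqref{psisymp4} to compute the difference $\Theta_{s,t}(x)-\Theta_{s,t}(x+1)$, factors it (through an auxiliary function $\Lambda_{s,t}$) as $1-(s-t)^2$ times products of manifestly completely monotonic rational functions, and then applies twice the elementary induction lemma (Lemma~\ref{elbert-laforgia-lem}: if $f(x)-f(x+\varepsilon)>0$ and $f\to\delta$ at infinity, then $f>\delta$) to each derivative, using $\lim_{x\to\infty}\Theta_{s,t}^{(k)}(x)=0$, to transfer the sign of the difference back to $(-1)^k\Theta_{s,t}^{(k)}$ itself. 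You instead make the representing measure explicit: $\Theta$ is the Laplace transform of the density $(p*p)(w)-\tau w\,p(w)$ with $p(u)=(1-e^{-\tau u})/(1-e^{-u})$, and you decide the sign of this density by the log-concavity (for $\tau<1$) or log-convexity (for $\tau>1$) of $p$, which you reduce correctly to the monotonicity of $\xi/\sinh\xi$. Each approach buys something: yours yields strictly more information (an explicit Bernstein density, which also shows at once that $\Theta\equiv0$ when $\abs{t-s}=1$, and it handles $s=t$ by the same mechanism with kernel $u/(1-e^{-u})$, recovering~\eqref{positivity} directly), and it is the style of argument standard in this literature; the paper's proof is more elementary and self-contained, needing no convolution or Fubini step and no differentiation under the integral sign, its only analytic input beyond the functional equation of $\psi$ being the decay of $\Theta_{s,t}^{(k)}$ at infinity. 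To make your write-up airtight, record two routine points: the density grows at most linearly (since $p$ is bounded), so the Laplace integral converges for all $y>0$ and termwise differentiation is legitimate; and the value $p(0^+)=\tau$ enters the concavity inequality at an endpoint, so $\ln p$ should be extended continuously to $u=0$ before invoking concavity on $[0,w]$.
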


The second aim of this paper is, by making use of Theorem~\ref{byproduct-1}, to provide an alternative proof for the monotonicity and convexity of the function $z_{s,t}(x)$, which is quoted as follows.

\begin{thm}[\cite{201-05-JIPAM, egp, notes-best-simple-open.tex,
notes-best-simple.tex, notes-best.tex, notes-best.tex-rgmia}]\label{egp-mon}
The function $z_{s,t}(x)$ in
$(-\alpha,\infty)$ is either convex and decreasing for $\abs{t-s}<1$ or
concave and increasing for $\abs{t-s}>1$.
\end{thm}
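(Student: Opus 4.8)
The plan is to exploit the factorization of the second derivative recorded in~\eqref{z''-delta} together with the complete monotonicity supplied by Theorem~\ref{byproduct-1}, and then to fix the sign of the first derivative by an asymptotic argument at infinity. The first step is to notice that the factor multiplying the bracketed expression in~\eqref{z''-delta} is positive: for $s\ne t$ one has $z_{s,t}(x)+x=\bigl[\Gamma(x+t)/\Gamma(x+s)\bigr]^{1/(t-s)}>0$, and for $s=t$ one has $z_{s,s}(x)+x=e^{\psi(x+s)}>0$. Recognizing that the braces in~\eqref{z''-delta} are exactly $\Delta_{s,t}(x)$, formula~\eqref{z''-delta} becomes $z''_{s,t}(x)=[z_{s,t}(x)+x]\,\Delta_{s,t}(x)$, so that the sign of $z''_{s,t}$ coincides with that of $\Delta_{s,t}$.

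Next I would invoke Theorem~\ref{byproduct-1}. Since a completely monotonic function is in particular nonnegative (the case $n=0$ of the defining inequality), that theorem gives $\Delta_{s,t}(x)\ge0$ when $\abs{t-s}<1$ and $\Delta_{s,t}(x)\le0$ when $\abs{t-s}>1$. Combined with the positivity of $z_{s,t}(x)+x$, this yields $z''_{s,t}(x)\ge0$ for $\abs{t-s}<1$ and $z''_{s,t}(x)\le0$ for $\abs{t-s}>1$, so $z_{s,t}$ is convex in the first regime and concave in the second.

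To settle the monotonicity I would compute $\lim_{x\to\infty}z'_{s,t}(x)$ from~\eqref{zi}. The asymptotics $\Gamma(x+t)/\Gamma(x+s)\sim x^{t-s}$ give $z_{s,t}(x)+x\sim x$, while $\psi(x+t)-\psi(x+s)\sim(t-s)\psi'(x)\sim(t-s)/x$ give $[\psi(x+t)-\psi(x+s)]/(t-s)\sim1/x$; the case $s=t$ is handled in the same way through $e^{\psi(x+s)}\sim x$ and $\psi'(x+s)\sim1/x$. Substituting into~\eqref{zi} produces $z'_{s,t}(x)\to1-1=0$ as $x\to\infty$. Finally, in the case $\abs{t-s}<1$ convexity makes $z'_{s,t}$ nondecreasing, and a nondecreasing function tending to $0$ at $\infty$ satisfies $z'_{s,t}(x)\le0$ everywhere, so $z_{s,t}$ is decreasing; in the case $\abs{t-s}>1$ concavity makes $z'_{s,t}$ nonincreasing, and a nonincreasing function tending to $0$ at $\infty$ satisfies $z'_{s,t}(x)\ge0$ everywhere, so $z_{s,t}$ is increasing.

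I expect the main technical obstacle to be the justification of the limit $\lim_{x\to\infty}z'_{s,t}(x)=0$. Here the two leading factors, of orders $x$ and $1/x$, cancel to give exactly $1$, so one cannot merely use the leading-order asymptotics; one must expand $\Gamma(x+t)/\Gamma(x+s)$ and $\psi(x+t)-\psi(x+s)$ to sufficiently high order to confirm that their product genuinely tends to $1$ rather than to some other constant or merely remaining bounded. Everything else in the argument is a direct consequence of Theorem~\ref{byproduct-1} and the sign bookkeeping above.
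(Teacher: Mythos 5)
Your proposal is correct, and its skeleton coincides with the paper's proof: the convexity/concavity follows exactly as in the paper from the factorization $z''_{s,t}(x)=[z_{s,t}(x)+x]\Delta_{s,t}(x)$ together with the sign of $\Delta_{s,t}$ supplied by Theorem~\ref{byproduct-1}, and the monotonicity then follows, again exactly as in the paper, from $\lim_{x\to\infty}z'_{s,t}(x)=0$ combined with the monotonicity of $z'_{s,t}$. The only genuine divergence is in how that limit is justified. You propose a direct Stirling-type expansion, pushing $\Gamma(x+t)/\Gamma(x+s)$ and the divided difference of $\psi$ to second order so that the $O(1/x)$ terms cancel (they do: one finds $[z_{s,t}(x)+x]\cdot\frac{\psi(x+t)-\psi(x+s)}{t-s}=1+O(1/x^2)$, since the correction $\frac{s+t-1}{2}$ in the first factor exactly offsets the correction $-\frac{s+t-1}{2x^2}$ in the second). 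The paper instead avoids higher-order expansions by sandwiching: it uses the inequality~\eqref{qi-gam-ineq} and the mean value theorem to trap $z'_{s,t}(x)$ between $\psi'(x+s)e^{\psi(x+s)}-1$ and $\psi'(x+t)e^{\psi(x+t)}-1$, and then proves $\lim_{x\to\infty}\psi'(x)e^{\psi(x)}=1$ from the elementary bounds~\eqref{psi-ineq} and the one-term asymptotic~\eqref{psi1-asym}. Your route is more self-contained (no appeal to the external gamma-function inequality of~\cite{ingamma}) but requires carrying the asymptotics carefully to the order you flagged; the paper's route trades that bookkeeping for a known inequality and needs only leading-order asymptotics of $\psi'$. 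Either way the conclusion $\lim_{x\to\infty}z'_{s,t}(x)=0$ is sound, and the rest of your argument matches the paper verbatim.
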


It is well-known~\cite{WallisCosineFormula} that Wallis cosine or sine formula is
\begin{multline}\label{cosine-formula}
\int_0^{\pi/2}\sin^nx\td x=\int_0^{\pi/2}\cos^nx\td x\\
=\frac{\sqrt\pi\,\Gamma((n+1)/2)}{n\Gamma({n}/2)}=\begin{cases}
\dfrac{\pi}2\cdot\dfrac{(n-1)!!}{n!!}&\text{for $n$ even},\\[1em]
\dfrac{(n-1)!!}{n!!}&\text{for $n$ odd},
\end{cases}
\end{multline}
where $n!!$ denotes the double factorial. It has been estimated by many mathematicians and a lot of inequalities were established in, for example,~\cite{wallis-cao, wallis3, wallis-indon, wallis-tamk, wallis2, chenwallis, chenwallis-rgmia, wallis-gaz, kaz-wallis, Koumandos-PAMS-06, mia-qi-cui-xu-99, waston, zhao-wu} and related references therein.
\par
The third aim of this paper is, by utilizing Theorem~\ref{egp-mon}, to prove two sharp double inequalities relating to Wallis cosine or sine formula~\eqref{cosine-formula} and to bound the probability integral or error function as follows.

\begin{thm}\label{wallis-ineq-thm}
For $n\in\mathbb{N}$,
\begin{gather}\label{best bounds Wallis}
\frac{1}{\sqrt{\pi(n+{4}/{\pi}-1)}}\le\frac{(2n-1)!!}{(2n)!!}
<\frac{1}{\sqrt{\pi(n+1/4)}},\\\label{Wallis-type ineq}
\frac{\sqrt{\pi}}{2\sqrt{n+{9\pi}/{16}-1}}\le\frac{(2n)!!}{(2n+1)!!}
<\frac{\sqrt{\pi}}{2\sqrt{n+3/4}}
\end{gather}
and
\begin{equation}\label{17inequal}
\frac{\sqrt{\pi}}{\sqrt{1+(9\pi/16-1)/n}}
\le\int_{-\sqrt{n}}^{\sqrt{n}}e^{-x^2} \td x
<\frac{\sqrt{\pi}}{\sqrt{1-3/{(4n)}}}.
\end{equation}
In particular, taking $n\to\infty$ in~\eqref{17inequal} leads to
\begin{equation}
\int_{-\infty}^{\infty}e^{-x^2}\td x=\sqrt{\pi}\,.
\end{equation}
The constants $\frac{4}{\pi}-1$ and $\frac14$ in~\eqref{best bounds Wallis}
and the constants $\frac{9\pi}{16}-1$ and $\frac34$ in~\eqref{Wallis-type
ineq} are the best possible.
\end{thm}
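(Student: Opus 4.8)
The plan is to reduce both double-factorial inequalities to two-sided estimates for the one-variable function $z_{s,t}$ and then read off everything from Theorem~\ref{egp-mon}. First I would rewrite the two Wallis ratios through the gamma function. From $\Gamma(n+1/2)=\frac{(2n-1)!!}{2^{n}}\sqrt{\pi}$ and $(2n)!!=2^{n}\Gamma(n+1)$, together with the identity $z_{s,t}(x)+x=\bigl[\Gamma(x+t)/\Gamma(x+s)\bigr]^{1/(t-s)}$, one gets
\[
\frac{(2n-1)!!}{(2n)!!}=\frac{1}{\sqrt{\pi}}\,\frac{\Gamma(n+1/2)}{\Gamma(n+1)}=\frac{1}{\sqrt{\pi\,[\,n+z_{1/2,1}(n)\,]}},\qquad \frac{(2n)!!}{(2n+1)!!}=\frac{\sqrt{\pi}}{2}\,\frac{\Gamma(n+1)}{\Gamma(n+3/2)}=\frac{\sqrt{\pi}}{2\sqrt{\,n+z_{1,3/2}(n)\,}},
\]
since in both cases $t-s=1/2$. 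Hence \eqref{best bounds Wallis} is equivalent to $\tfrac14<z_{1/2,1}(n)\le\tfrac4\pi-1$ and \eqref{Wallis-type ineq} to $\tfrac34<z_{1,3/2}(n)\le\tfrac{9\pi}{16}-1$.

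Because $\abs{t-s}=\tfrac12<1$, Theorem~\ref{egp-mon} says both $z_{1/2,1}$ and $z_{1,3/2}$ are decreasing, so on the integers each attains its maximum at $n=1$ and its infimum as $n\to\infty$. A direct evaluation gives $z_{1/2,1}(1)=\tfrac4\pi-1$ and $z_{1,3/2}(1)=\tfrac{9\pi}{16}-1$, while the asymptotic relation $\Gamma(x+a)/\Gamma(x+b)=x^{a-b}\bigl[1+\tfrac{(a-b)(a+b-1)}{2x}+O(x^{-2})\bigr]$ yields, after squaring, $\lim_{x\to\infty}z_{1/2,1}(x)=\tfrac14$ and $\lim_{x\to\infty}z_{1,3/2}(x)=\tfrac34$. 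Inserting these four values into the equivalences above produces \eqref{best bounds Wallis} and \eqref{Wallis-type ineq}, and their optimality is automatic: the upper constants are attained at $n=1$ and so cannot be decreased, whereas the lower constants are the unattained limits at infinity and so cannot be increased without the inequality failing for large $n$.

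For the probability integral \eqref{17inequal} I would sandwich the Gaussian by two elementary powers, $\bigl(1-\tfrac{x^{2}}{n}\bigr)^{n}<e^{-x^{2}}<\bigl(1+\tfrac{x^{2}}{n}\bigr)^{-n}$ for $x\ne0$ (the left bound on $\abs{x}\le\sqrt n$), and integrate. The substitutions $x=\sqrt n\sin\theta$ and $x=\sqrt n\tan\theta$ turn the two comparison integrals into Wallis integrals,
\[
\int_{-\sqrt n}^{\sqrt n}\Bigl(1-\tfrac{x^{2}}{n}\Bigr)^{n}\td x=2\sqrt n\,\frac{(2n)!!}{(2n+1)!!},\qquad \int_{-\infty}^{\infty}\Bigl(1+\tfrac{x^{2}}{n}\Bigr)^{-n}\td x=\pi\sqrt n\,\frac{(2n-3)!!}{(2n-2)!!}.
\]
Estimating the first from below by the lower bound in \eqref{Wallis-type ineq} and the second from above by the upper bound in \eqref{best bounds Wallis} taken at $n-1$ (the case $n=1$ being verified by hand) delivers exactly the two sides of \eqref{17inequal}; letting $n\to\infty$ and squeezing then recovers $\int_{-\infty}^{\infty}e^{-x^{2}}\td x=\sqrt\pi$.

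The gamma bookkeeping and the trigonometric substitutions are routine. The main obstacle is twofold: correctly matching each factorial ratio with the right $z_{s,t}$ and extracting the second-order term of the gamma asymptotics needed to pin down the limits $\tfrac14$ and $\tfrac34$; and, for \eqref{17inequal}, the slightly non-obvious decision to integrate the upper comparison function $\bigl(1+x^{2}/n\bigr)^{-n}$ over all of $\mathbb{R}$ rather than over $[-\sqrt n,\sqrt n]$, which is what collapses it to the clean Wallis integral attached to $(2n-3)!!/(2n-2)!!$.
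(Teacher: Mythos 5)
Your proof is correct and follows essentially the same route as the paper: identify the two Wallis ratios with $z_{1/2,1}(n)$ and $z_{1,3/2}(n)$, invoke Theorem~\ref{egp-mon} for the decreasing property, and read off the sharp constants from the value at $n=1$ and the limit as $n\to\infty$. The only difference is that for~\eqref{17inequal} the paper simply refers to the proof of Theorem~1.2 in~\cite{wallis-cao}, whereas you reproduce that argument in full (the squeeze $\bigl(1-x^2/n\bigr)^n<e^{-x^2}<\bigl(1+x^2/n\bigr)^{-n}$ reduced to Wallis integrals and then estimated by~\eqref{Wallis-type ineq} and by~\eqref{best bounds Wallis} at $n-1$), which is precisely the method of that cited reference.
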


Let
\begin{equation}
\Omega_n=\frac{\pi^{n/2}}{\Gamma(1+n/2)}
\end{equation}
be the volume of the unit ball on $\mathbb{R}^n$. The fourth aim of this paper is, by employing Theorem~\ref{egp-mon}, to recover a double inequality for ratio of the volumes of the unit balls in $\mathbb{R}^{n-1}$ and $\mathbb{R}^n$ respectively as follows.

\begin{thm}[{\cite[Theorem~2]{ball-volume-rn}}]\label{alzer-ball-thm-2}
For $n\in\mathbb{N}$, the inequality
\begin{equation}\label{alzer-ball-3.8}
\sqrt{\frac{n+A}{2\pi}}<\frac{\Omega_{n-1}}{\Omega_n} \le\sqrt{\frac{n+B}{2\pi}}
\end{equation}
holds if and only if $A\le\frac12$ and $B\ge\frac\pi2-1$.
\end{thm}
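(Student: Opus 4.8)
The plan is to reduce the double inequality~\eqref{alzer-ball-3.8} to a monotonicity statement about $z_{s,t}(x)$ with the special parameters $s=\frac12$ and $t=1$, and then to invoke Theorem~\ref{egp-mon}. First I would compute the ratio explicitly. Since $\Omega_n=\frac{\pi^{n/2}}{\Gamma(1+n/2)}$, a direct cancellation gives
\begin{equation*}
\frac{\Omega_{n-1}}{\Omega_n}=\frac{1}{\sqrt\pi}\cdot\frac{\Gamma(n/2+1)}{\Gamma((n+1)/2)}
=\frac{1}{\sqrt\pi}\biggl[\frac{\Gamma(x+1)}{\Gamma(x+1/2)}\biggr],\qquad x=\frac n2,
\end{equation*}
so that, recalling the definition of $z_{s,t}$ with $t-s=\frac12$,
\begin{equation*}
2\pi\biggl(\frac{\Omega_{n-1}}{\Omega_n}\biggr)^2=2\biggl[\frac{\Gamma(x+1)}{\Gamma(x+1/2)}\biggr]^2=2\bigl[z_{1/2,1}(x)+x\bigr]=n+2\,z_{1/2,1}(n/2).
\end{equation*}
Because all quantities occurring are positive in the relevant range, squaring is reversible, and the double inequality~\eqref{alzer-ball-3.8} becomes exactly
\begin{equation*}
\frac{A}{2}<z_{1/2,1}\!\left(\frac n2\right)\le\frac B2\qquad\text{for all }n\in\mathbb{N}.
\end{equation*}

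Next I would exploit monotonicity. Since $\abs{t-s}=\frac12<1$, Theorem~\ref{egp-mon} tells us that $z_{1/2,1}$ is convex and decreasing on $(-\frac12,\infty)$; hence the sequence $z_{1/2,1}(n/2)$ is strictly decreasing in $n$. Its largest value is therefore attained at $n=1$, where
\begin{equation*}
z_{1/2,1}\!\left(\tfrac12\right)=\biggl[\frac{\Gamma(3/2)}{\Gamma(1)}\biggr]^2-\frac12=\frac\pi4-\frac12,
\end{equation*}
while its infimum is the limit $\lim_{x\to\infty}z_{1/2,1}(x)$. Using the classical asymptotic expansion $\frac{\Gamma(x+t)}{\Gamma(x+s)}=x^{t-s}\bigl[1+\frac{(t-s)(s+t-1)}{2x}+O(x^{-2})\bigr]$, raising to the power $1/(t-s)$ and subtracting $x$ yields $z_{s,t}(x)\to\frac{s+t-1}{2}$, which for $s=\frac12,\ t=1$ gives the limit $\frac14$, approached strictly from above since $z_{1/2,1}$ is strictly decreasing and non-constant.

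From these two extremal values the characterization of the best constants falls out. The upper bound $z_{1/2,1}(n/2)\le\frac B2$ holds for every $n$ precisely when $\frac B2$ dominates the maximum $\frac\pi4-\frac12$, i.e.\ $B\ge\frac\pi2-1$, the equality at $n=1$ accounting for the non-strict $\le$ in~\eqref{alzer-ball-3.8}. The lower bound $\frac A2<z_{1/2,1}(n/2)$ holds for every $n$ precisely when $\frac A2$ does not exceed the (never attained) infimum $\frac14$: if $A\le\frac12$ then $\frac A2\le\frac14<z_{1/2,1}(n/2)$ for all $n$, whereas any $A>\frac12$ is eventually violated because $z_{1/2,1}(n/2)\to\frac14$. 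This establishes both directions of the ``if and only if.''

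I expect the main obstacle to be the rigorous justification of $\lim_{x\to\infty}z_{1/2,1}(x)=\frac14$ together with the strictness of the decrease, the latter being what guarantees that the infimum $\frac14$ is genuinely never attained and hence that $A\le\frac12$ already forces the strict lower inequality. The monotonicity furnished by Theorem~\ref{egp-mon} does the heavy lifting by pinning the extrema to the endpoints $n=1$ and $n\to\infty$, so once the limit is secured the remaining work is merely the evaluation of $z_{1/2,1}(\tfrac12)$ and the careful bookkeeping of which inequality is strict.
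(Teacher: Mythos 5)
Your proposal is correct and follows essentially the same route as the paper: rearrange \eqref{alzer-ball-3.8} into $\frac A2<z_{1/2,1}(n/2)\le\frac B2$, invoke Theorem~\ref{egp-mon} to get that $z_{1/2,1}$ is decreasing, and pin the best constants to $z_{1/2,1}(1/2)=\frac\pi4-\frac12$ and $\lim_{x\to\infty}z_{1/2,1}(x)=\frac14$ (the paper writes $z_{1,1/2}$, which is the same function by symmetry of the definition). Your write-up is in fact more complete than the paper's, which states the rearrangement, the monotonicity, the endpoint value and the limit without justifying the limit or the non-attainment of the infimum that your bookkeeping makes explicit.
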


The final aim of this paper is, by using Theorem~\ref{byproduct-1}, to generalize the inequality~\eqref{batir-alzer-grinshpan-ineq} to a monotonic property as follows.

\begin{thm}\label{open-solve}
For real numbers $s$ and $t$, $\alpha=\min\{s,t\}$ and $c\in(-\alpha,\infty)$, let
\begin{equation}
g_{s,t}(x)=\begin{cases}\displaystyle
\frac1{t-s} \int_c^x\ln\biggl[\frac{\Gamma(u+t)}{\Gamma(u+s)} \frac{\Gamma(c+s)}{\Gamma(c+t)}\biggr]\td u,&s\ne t\\[1em]
\displaystyle
\int_c^x[\psi(u+s)-\psi(c+s)]\td u,&s=t
\end{cases}
\end{equation}
on $x\in(-\alpha,\infty)$. Then the function
\begin{equation}
f_{s,t}(x)=\begin{cases}\displaystyle
\frac{g_{s,t}(x)}{[g'_{s,t}(x)-1]\exp[g'_{s,t}(x)]+1},&x\ne c\\[1em]
\dfrac1{g''_{s,t}(c)},&x=c
\end{cases}
\end{equation}
on $x\in(-\alpha,\infty)$ is decreasing for $|s-t|<1$ and increasing for $|s-t|>1$.
\end{thm}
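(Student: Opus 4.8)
The plan is to differentiate $f_{s,t}$ directly and to reduce the sign of its derivative to the sign of $\Delta_{s,t}$, which is controlled by Theorem~\ref{byproduct-1}. Throughout I write $g=g_{s,t}$. Differentiating under the integral sign gives
\[
g'(x)=\frac1{t-s}\ln\biggl[\frac{\Gamma(x+t)}{\Gamma(x+s)}\frac{\Gamma(c+s)}{\Gamma(c+t)}\biggr],\qquad
g''(x)=\frac{\psi(x+t)-\psi(x+s)}{t-s},\qquad
g'''(x)=\frac{\psi'(x+t)-\psi'(x+s)}{t-s}
\]
(with the obvious limits when $s=t$), so that $g(c)=g'(c)=0$ and, since $\psi$ is increasing, $g''(x)>0$ on the whole interval. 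Hence $g$ is strictly convex with its unique minimum at $c$, which yields $g(x)>0$ for $x\ne c$, while $g'$ is strictly increasing with $g'(x)<0$ for $x<c$ and $g'(x)>0$ for $x>c$. Comparing with~\eqref{Delta-dfn}, the decisive identity I would record is
\[
[g''(x)]^2+g'''(x)=\Delta_{s,t}(x).
\]

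Next I would treat the denominator $D(x)=[g'(x)-1]e^{g'(x)}+1$. The auxiliary function $\phi(y)=(y-1)e^y+1$ satisfies $\phi(0)=0$ and $\phi'(y)=ye^y$, so $\phi$ is nonnegative and vanishes only at $y=0$; consequently $D(x)\ge0$, with $D(x)>0$ for $x\ne c$ and $D(c)=0$. A short computation gives $D'(x)=g'(x)g''(x)e^{g'(x)}$, whence, for $x\ne c$,
\[
f'(x)=\frac{g'(x)D(x)-g(x)D'(x)}{D(x)^2}=\frac{g'(x)\,N(x)}{D(x)^2},\qquad N(x):=D(x)-g(x)g''(x)e^{g'(x)}.
\]
Since $D(x)^2>0$, the sign of $f'$ is exactly the sign of the product $g'(x)N(x)$, and one checks $N(c)=0$.

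The heart of the argument is the derivative of $N$. Writing $N(x)=e^{g'(x)}\bigl[g'(x)-1-g(x)g''(x)\bigr]+1$ and differentiating, every term not containing the factor $g$ cancels, and using the identity above I expect the clean formula
\[
N'(x)=-e^{g'(x)}g(x)\bigl\{[g''(x)]^2+g'''(x)\bigr\}=-e^{g'(x)}g(x)\Delta_{s,t}(x).
\]
Because $e^{g'(x)}>0$ and $g(x)>0$ for $x\ne c$, the sign of $N'$ is the sign of $-\Delta_{s,t}(x)$. Now Theorem~\ref{byproduct-1} supplies $\Delta_{s,t}(x)>0$ when $\abs{t-s}<1$ and $\Delta_{s,t}(x)<0$ when $\abs{t-s}>1$. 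Together with $N(c)=0$ this pins down the sign of $N$ on each side of $c$: for $\abs{t-s}<1$ the function $N$ decreases through $0$, so $N>0$ on $(-\alpha,c)$ and $N<0$ on $(c,\infty)$; since $g'$ is negative then positive across $c$, the product $g'N$ is negative throughout, giving $f'<0$ and hence $f$ decreasing. For $\abs{t-s}>1$ every sign reverses and $f$ is increasing.

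Finally I would tidy up the point $x=c$. A Taylor expansion there using $g(c)=g'(c)=0$ gives $g(x)\sim\tfrac12 g''(c)(x-c)^2$ and, since $\phi(y)\sim\tfrac12 y^2$, $D(x)\sim\tfrac12[g''(c)]^2(x-c)^2$, so $f$ extends continuously to $c$ with $f(c)=1/g''(c)$, matching the definition; the strict monotonicity on the two subintervals then propagates across $c$ by continuity. The only genuinely delicate step is the cancellation producing $N'(x)=-e^{g'(x)}g(x)\Delta_{s,t}(x)$; once this identity is secured, the conclusion follows at once from Theorem~\ref{byproduct-1}.
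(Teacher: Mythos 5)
Your proposal is correct and follows essentially the same route as the paper: the same quotient-rule reduction to the auxiliary function $h_{s,t}(x)=[g'_{s,t}(x)-1-g_{s,t}(x)g''_{s,t}(x)]\exp[g'_{s,t}(x)]+1$ (your $N$), the same key identity $h'_{s,t}(x)=-g_{s,t}(x)\bigl\{[g''_{s,t}(x)]^2+g'''_{s,t}(x)\bigr\}\exp[g'_{s,t}(x)]$ whose sign is settled by Theorem~\ref{byproduct-1} via $[g''_{s,t}]^2+g'''_{s,t}=\Delta_{s,t}$, and the same sign analysis across $x=c$. Your version even corrects a typo in the paper's displayed formula for $h'_{s,t}$ and adds the (omitted there) continuity check at $x=c$.
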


\begin{rem}
If taking $c=x^\ast$, then the case $s=t$ in Theorem~\ref{open-solve} becomes~\cite[Theorem~4.3]{alzer-grinshpan}:
For $0<a<b\le\infty$ and $x\in(a,b)$, the inequality~\eqref{batir-alzer-grinshpan-ineq} holds with the best possible constant factors
\begin{equation}
\alpha=\begin{cases}
Q(b),&b<\infty\\1,&b=\infty
\end{cases}\quad\text{and}\quad \beta=Q(a),
\end{equation}
where
\begin{equation}
Q(x)=\begin{cases}
\dfrac{\ln\Gamma(x)-\ln\Gamma(x^\ast)} {e^{\psi(x)}[\psi(x)-1]+1}, &x\ne x^\ast;\\[1em]
\dfrac1{\psi'(x^\ast)},&x=x^\ast.
\end{cases}
\end{equation}
\end{rem}

\section{Lemmas}

In order to prove our theorems, the following lemmas are necessary.

\begin{lem}\label{elbert-laforgia-lem}
Let $f(x)$ be defined in an infinite interval $I$. If
$$
\lim_{x\to\infty}f(x)=\delta\quad \text{and}\quad f(x)-f(x+\varepsilon)>0
$$
for some given $\varepsilon>0$, then $f(x)>\delta$ on $I$.
\end{lem}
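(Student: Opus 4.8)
The plan is to reduce the statement to the elementary behaviour of a monotone sequence generated by iterating the shift $x\mapsto x+\varepsilon$. Fix an arbitrary point $x\in I$ and consider the sequence $x_k=x+k\varepsilon$ for $k\in\{0,1,2,\dots\}$. Since $I$ is an infinite interval (hence unbounded above, so that the hypothesis $\lim_{x\to\infty}f(x)=\delta$ is meaningful), every $x_k$ lies in $I$ and $x_k\to\infty$. The goal is to show $f(x)>\delta$, and because $x$ is arbitrary this will give the conclusion on all of $I$.

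First I would record that the hypothesis $f(x)-f(x+\varepsilon)>0$, applied at each point $x_k$, yields $f(x_k)>f(x_{k+1})$ for every $k$. Thus the real sequence $a_k:=f(x_k)$ is strictly decreasing. Second, since $x_k\to\infty$ and $\lim_{x\to\infty}f(x)=\delta$, the same sequence satisfies $a_k\to\delta$.

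The key step is to combine these two observations: a strictly decreasing sequence that converges must converge to its infimum, so $\delta=\inf_k a_k$ and therefore $a_k>\delta$ for every $k$. Indeed, were $a_m=\delta$ for some index $m$, strict monotonicity would force $a_{m+1}<\delta$, contradicting that $\delta$ is the infimum. Taking $k=0$ gives $f(x)=a_0>\delta$, which is exactly what is wanted at the chosen point $x$.

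The argument is essentially routine, and I do not anticipate a genuine obstacle; the only point demanding care is the role of the strict inequality in the hypothesis. Were the hypothesis weakened to $f(x)-f(x+\varepsilon)\ge 0$, the conclusion would likewise weaken to $f\ge\delta$. The modest subtlety is therefore to make sure the strictness survives the passage to the limit, i.e. that a strictly decreasing convergent sequence never attains its limiting infimum, which is precisely the step isolated above.
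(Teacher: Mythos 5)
Your proposal is correct and follows essentially the same route as the paper: iterate the hypothesis along $x_k=x+k\varepsilon$ to produce a strictly decreasing sequence $f(x)>f(x+\varepsilon)>\dotsm>f(x+k\varepsilon)\to\delta$ and conclude $f(x)>\delta$. The only difference is presentational---you isolate explicitly the fact that a strictly decreasing convergent sequence never attains its limit, a step the paper leaves implicit in its one-line induction.
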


\begin{proof}
By induction, for any $x\in I$, we have
\begin{equation*}
f(x)>f(x+\varepsilon)>f(x+2\varepsilon)>\dotsm>f(x+k\varepsilon)\to\delta
\end{equation*}
as $k\to\infty$. The proof of Lemma \ref{elbert-laforgia-lem} is complete.
\end{proof}

\begin{lem}[\cite{abram}]
For any positive integer $n\in\tn$ and $x>0$,
\begin{gather}\label{psi-int-expr}
\psi(x)=\ln x+ \int_{0}^{\infty}\biggl[\frac1u-\frac1{1-e^{-u}}\biggr]e^{-xu}\td u,\\
\label{psin}
\psi ^{(n)}(x)=(-1)^{n+1}\int_{0}^{\infty}\frac{u^{n}}{1-e^{-u}}e^{-xu}\td u,\\
\label{psisymp4}
\psi^{(n-1)}(x+1)=\psi^{(n-1)}(x)+\frac{(-1)^{n-1}(n-1)!}{x^n},\\
\label{psi-ineq}
\ln x-\frac1x<\psi(x)<\ln x-\frac1{2x}.
\end{gather}
As $x\to\infty$,
\begin{equation}
\label{psi1-asym}
\psi'(x)\sim\frac1x+\frac1{2x^2}+\dotsm.
\end{equation}
\end{lem}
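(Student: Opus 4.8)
The plan is to treat these five facts as standard consequences of a single master integral representation together with the functional equation of $\Gamma$, reconstructing each rather than merely quoting \cite{abram}. The two classical building blocks I would start from are Gauss's integral
$$
\psi(x)=\int_0^\infty\biggl[\frac{e^{-u}}{u}-\frac{e^{-xu}}{1-e^{-u}}\biggr]\td u
$$
and the Frullani integral $\ln x=\int_0^\infty\frac{e^{-u}-e^{-xu}}{u}\td u$, both valid for $x>0$. Subtracting the second from the first, the $\frac{e^{-u}}{u}$ terms cancel and one is left precisely with \eqref{psi-int-expr}. Convergence is routine: near $u=0$ the bracket $\frac1u-\frac1{1-e^{-u}}$ tends to $-\frac12$, so the integrand is bounded, while near $u=\infty$ the factor $e^{-xu}$ secures absolute convergence.

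Next I would obtain \eqref{psin} by differentiating \eqref{psi-int-expr} under the integral sign. One differentiation gives $\psi'(x)=\frac1x+\int_0^\infty\bigl[-1+\frac{u}{1-e^{-u}}\bigr]e^{-xu}\td u$, and absorbing $\frac1x=\int_0^\infty e^{-xu}\td u$ into the integral yields the case $n=1$ of \eqref{psin}. Each further differentiation merely brings down a factor $-u$, so $\psi^{(n)}(x)=(-1)^{n-1}\int_0^\infty\frac{u^n}{1-e^{-u}}e^{-xu}\td u$, and $(-1)^{n-1}=(-1)^{n+1}$ delivers the stated sign. The differentiation under the integral is legitimate because, for $x$ in any compact subset of $(0,\infty)$, the differentiated integrands are dominated by integrable functions of $u$.

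The recurrence \eqref{psisymp4} I would derive purely algebraically: logarithmic differentiation of $\Gamma(x+1)=x\Gamma(x)$ gives $\psi(x+1)=\psi(x)+\frac1x$, and differentiating this identity $n-1$ times together with $\frac{\mathrm{d}^{\,n-1}}{\mathrm{d}x^{n-1}}\,x^{-1}=(-1)^{n-1}(n-1)!\,x^{-n}$ produces \eqref{psisymp4}. For the two-sided bound \eqref{psi-ineq} I would use \eqref{psi-int-expr} and $\int_0^\infty e^{-xu}\td u=\frac1x$: it suffices to show that the kernel $\phi(u)=\frac1u-\frac1{1-e^{-u}}$ satisfies $-1<\phi(u)<-\frac12$ for all $u>0$, since integrating this chain against $e^{-xu}$ turns it into $-\frac1x<\psi(x)-\ln x<-\frac1{2x}$. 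After clearing denominators the upper bound $\phi(u)<-\frac12$ reduces to $e^u(u-2)+u+2>0$, while the lower bound $\phi(u)>-1$ reduces to the elementary $e^u>1+u$.

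Finally, \eqref{psi1-asym} I would read off from the $n=1$ case of \eqref{psin} by Watson's lemma: expanding $\frac{u}{1-e^{-u}}=1+\frac{u}2+\frac{u^2}{12}+\dotsm$ about $u=0$ and using $\int_0^\infty u^k e^{-xu}\td u=\frac{k!}{x^{k+1}}$ gives $\psi'(x)\sim\frac1x+\frac1{2x^2}+\frac1{6x^3}+\dotsm$, whose leading terms are exactly \eqref{psi1-asym}. I expect the genuine work to sit in two places. The first is the kernel inequality $e^u(u-2)+u+2>0$, which I would settle by noting that this function and its first derivative both vanish at $u=0$ while its second derivative $u\,e^u$ is positive on $(0,\infty)$, forcing positivity for $u>0$. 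The second is checking that the Watson-lemma series is genuinely asymptotic rather than convergent, so that the $\sim$ in \eqref{psi1-asym} is read correctly. Everything else is bookkeeping once the master representation \eqref{psi-int-expr} is in hand.
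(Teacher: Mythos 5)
Your proposal is correct, but a comparison with ``the paper's proof'' is somewhat moot here: the paper gives no proof of this lemma at all, quoting it wholesale from Abramowitz and Stegun's handbook. What you have done is supply the standard self-contained derivation, and every step checks out. The cancellation of the $\frac{e^{-u}}{u}$ terms between Gauss's integral and the Frullani integral does yield \eqref{psi-int-expr}; differentiating under the integral (legitimately, since the kernel $\phi(u)=\frac1u-\frac1{1-e^{-u}}$ is bounded on $(0,\infty)$ and the differentiated integrands are dominated by $Cu^ke^{-\delta u}$ for $x\ge\delta>0$) and absorbing $\frac1x=\int_0^\infty e^{-xu}\td u$ gives \eqref{psin} with the correct sign via $(-1)^{n-1}=(-1)^{n+1}$; the recurrence \eqref{psisymp4} follows exactly as you say from logarithmic differentiation of $\Gamma(x+1)=x\Gamma(x)$. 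Your reduction of \eqref{psi-ineq} to the kernel bounds $-1<\phi(u)<-\frac12$ is right, both algebraic reductions are correct (the lower bound to $e^u>1+u$, the upper to $g(u)=e^u(u-2)+u+2>0$, which your observation $g(0)=g'(0)=0$ and $g''(u)=ue^u>0$ settles), and since $\phi(u)<-\frac12$ is strict for every $u>0$ (the value $-\frac12$ is only the limit as $u\to0^+$), the integrated inequalities remain strict; note in passing that $\phi(0^+)=-\frac12$ and $\phi(\infty)=-1$ show the constants in \eqref{psi-ineq} are sharp. Finally, the Watson-lemma computation from $\frac{u}{1-e^{-u}}=1+\frac{u}2+\frac{u^2}{12}+\dotsm$ correctly produces $\psi'(x)\sim\frac1x+\frac1{2x^2}+\frac1{6x^3}+\dotsm$, and you are right to flag that this series is merely asymptotic (the Taylor expansion has radius of convergence $2\pi$ and the Bernoulli coefficients eventually grow), which is all that \eqref{psi1-asym} asserts and all the paper uses, namely $x\psi'(x)\to1$. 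In short: your route buys a complete proof where the paper only offers a citation, at the cost of the two genuinely nontrivial verifications you correctly identified in advance.
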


\begin{lem}[\cite{ingamma, ingamma-rgmia}]
For $s>r>0$,
\begin{equation}\label{qi-gam-ineq}
\exp\left[(s-r)\psi(s)\right]>\frac{\Gamma(s)}{\Gamma(r)} >\exp\left[(s-r)\psi(r)\right].
\end{equation}
\end{lem}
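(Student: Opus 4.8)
The plan is to pass to logarithms and to recognize the two-sided bound as a statement about the divided difference of $\ln\Gamma$. Since $s>r>0$, taking logarithms and dividing through by the positive quantity $s-r$ transforms the sought inequality into the equivalent chain
\begin{equation*}
\psi(r)<\frac{\ln\Gamma(s)-\ln\Gamma(r)}{s-r}<\psi(s).
\end{equation*}
Because $\psi=(\ln\Gamma)'$, the middle term is precisely the mean value of $\psi$ over $[r,s]$, namely
\begin{equation*}
\frac{\ln\Gamma(s)-\ln\Gamma(r)}{s-r}=\frac1{s-r}\int_r^s\psi(u)\td u.
\end{equation*}

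First I would establish that $\psi$ is strictly increasing on $(0,\infty)$. This follows at once from the integral representation \eqref{psin} with $n=1$: one has $\psi'(x)=\int_0^\infty\frac{u}{1-e^{-u}}e^{-xu}\td u$, whose integrand is strictly positive for $u>0$, so that $\psi'(x)>0$ for every $x>0$. With strict monotonicity in hand, for every $u\in(r,s)$ we have $\psi(r)<\psi(u)<\psi(s)$; integrating this double inequality over $u\in[r,s]$ and dividing by the positive length $s-r$ yields exactly the displayed chain of strict inequalities for the mean value, which is the logarithmic form of the claim. Exponentiating then restores the stated inequalities.

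The argument is elementary and presents no serious obstacle; the only point requiring attention is the \emph{strictness} of the bounds, which is guaranteed precisely because $\psi'$ is strictly positive, rather than merely nonnegative, throughout $(0,\infty)$. An equivalent route would invoke the mean value theorem to write the divided difference as $\psi(\xi)$ for some $\xi\in(r,s)$ and then use $\psi(r)<\psi(\xi)<\psi(s)$; this is marginally shorter but conceals the uniformity that the integral-average formulation makes transparent, so I would favor the integral form in the write-up.
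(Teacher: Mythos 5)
Your proof is correct and complete: the reduction to $\psi(r)<\frac{\ln\Gamma(s)-\ln\Gamma(r)}{s-r}<\psi(s)$, the strict positivity of $\psi'$ from~\eqref{psin} with $n=1$, and the integral-average (or mean value theorem) step all hold, and you correctly attend to strictness. Note that the paper itself offers no proof of this lemma --- it is imported from \cite{ingamma, ingamma-rgmia} --- so there is no internal argument to compare against; your proof is the standard one for this inequality (divided difference of $\ln\Gamma$ squeezed by the strictly increasing $\psi$), essentially identical to the mean-value-theorem argument in the cited source, and would serve as a self-contained substitute for the citation.
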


\begin{lem}[\cite{widder}]\label{comp-func-product}
A product of finite completely monotonic functions is also completely monotonic.
\end{lem}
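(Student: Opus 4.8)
The plan is to reduce the general assertion to the two-factor case and then proceed by induction on the number of factors. First I would establish that if $f$ and $g$ are both completely monotonic on an interval $I$, then so is their product $fg$. The natural tool here is the general Leibniz rule for the $n$-th derivative of a product,
\begin{equation*}
(fg)^{(n)}(x)=\sum_{k=0}^{n}\binom{n}{k}f^{(k)}(x)\,g^{(n-k)}(x),
\end{equation*}
which applies because both factors possess derivatives of all orders on $I$, whence $fg$ does too.

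The key step is to multiply this identity by $(-1)^{n}$ and to split the sign factor across the two derivatives appearing in each summand. Writing $(-1)^{n}=(-1)^{k}(-1)^{n-k}$, I would obtain
\begin{equation*}
(-1)^{n}(fg)^{(n)}(x)=\sum_{k=0}^{n}\binom{n}{k}\bigl[(-1)^{k}f^{(k)}(x)\bigr]\bigl[(-1)^{n-k}g^{(n-k)}(x)\bigr].
\end{equation*}
By the complete monotonicity of $f$ and of $g$, each bracketed factor is nonnegative on $I$, and the binomial coefficients $\binom{n}{k}$ are positive; hence every summand is nonnegative and the whole sum is $\ge0$. Since this holds for all $n\ge0$ and all $x\in I$, the product $fg$ is completely monotonic on $I$.

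Finally I would finish by induction on the number $m$ of factors. The base case $m=1$ is immediate, and the two-factor result just proved furnishes the inductive step: if $f_{1}\dotsm f_{m-1}$ is completely monotonic by the inductive hypothesis and $f_{m}$ is completely monotonic, then $\bigl(f_{1}\dotsm f_{m-1}\bigr)f_{m}$ is completely monotonic by the two-factor case. This lemma is elementary and presents no genuine difficulty; the only point deserving attention is the bookkeeping of the sign split $(-1)^{n}=(-1)^{k}(-1)^{n-k}$ in the Leibniz expansion, which is precisely what guarantees that \emph{every} term inherits the correct sign and thus that the sum cannot have cancellation forcing it negative.
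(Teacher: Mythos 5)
Your proof is correct and is precisely the classical argument found in the cited reference (Widder); the paper itself offers no proof of this lemma, quoting it directly from the literature. The Leibniz expansion with the sign split $(-1)^{n}=(-1)^{k}(-1)^{n-k}$ followed by induction on the number of factors is exactly the standard route, and there is no gap.
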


\section{Proofs of theorems}

Now we are in a position to prove our theorems.

\begin{proof}[Proof of Theorem \ref{byproduct-1}]
Direct computation and utilization of~\eqref{psisymp4} gives
\begin{gather}
\Theta_{s,t}(x)-\Theta_{s,t}(x+1)
=\big\{[\psi(x+t)+\psi(x+t+1)]-[\psi(x+s)+\psi(x+s+1)]\big\}\notag\\
\begin{aligned}\notag
&\times\big\{[\psi(x+t)-\psi(x+t+1)]-[\psi(x+s)-\psi(x+s+1)]\big\}\\
&+(t-s)\big\{[\psi'(x+t)-\psi'(x+t+1)]-[\psi'(x+s)-\psi'(x+s+1)]\big\}
\end{aligned}\\
\begin{aligned}\label{lambda-dfn}
&=\bigg\{\frac{[\psi(x+t+1)+\psi(x+t)]-[\psi(x+s+1)+\psi(x+s)]}{t-s}\\
&\quad-\frac{2x+s+t}{(x+s)(x+t)}\bigg\}\frac{(t-s)^2}{(x+s)(x+t)}\\
&\triangleq\Lambda_{s,t}(x)\frac{(t-s)^2}{(x+s)(x+t)}
\end{aligned}
\end{gather}
and
\begin{align*}
\Lambda_{s,t}(x)-\Lambda_{s,t}(x+1)
&=\frac1{t-s}\bigg(\frac1{x+s}+\frac1{x+s+1}-\frac1{x+t}-\frac1{x+t+1}\bigg)\\
&\quad-\frac{2x^2+2(s+t+1)x+s^2+t^2+s+t}{(x+s)(x+s+1)(x+t)(x+t+1)}\\
&=\frac{1-(s-t)^2}{(x+s)(x+s+1)(x+t)(x+t+1)}.
\end{align*}
\par
Since $\lim_{x\to\infty}\Lambda_{s,t}^{(i)}(x)=0$ for any nonnegative integer
$i$ by~\eqref{psi-int-expr} and~\eqref{psin} and the function
$$
\frac{\Lambda_{s,t}(x)-\Lambda_{s,t}(x+1)}{1-(s-t)^2}
$$
is completely monotonic by Lemma~\ref{comp-func-product}, that is,
$$
(-1)^i\frac{[\Lambda_{s,t}(x)-\Lambda_{s,t}(x+1)]^{(i)}}{1-(s-t)^2}
=\frac{(-1)^i\Lambda_{s,t}^{(i)}(x)-(-1)^i\Lambda_{s,t}^{(i)}(x+1)}{1-(s-t)^2}\ge0,
$$
on $(-\alpha,\infty)$, then
$
\frac{(-1)^i\Lambda_{s,t}^{(i)}(x)}{1-(s-t)^2}\ge0
$
follows from Lemma~\ref{elbert-laforgia-lem}. This means the function
$\frac{\Lambda_{s,t}(x)}{1-(s-t)^2}$ is completely monotonic in
$(-\alpha,\infty)$.
\par
Since the function $\frac{(t-s)^2}{(x+s)(x+t)}$ is completely monotonic,
then the function
$$
\frac{\Theta_{s,t}(x)-\Theta_{s,t}(x+1)}{1-(s-t)^2}
$$
is completely monotonic on $(-\alpha,\infty)$ by considering~\eqref{lambda-dfn} and Lemma~\ref{comp-func-product}, which is equivalent to
$$
(-1)^k\bigg[\frac{\Theta_{s,t}(x)-\Theta_{s,t}(x+1)}{1-(s-t)^2}\bigg]^{(k)}
=\frac{(-1)^k\Theta_{s,t}^{(k)}(x)-(-1)^k\Theta_{s,t}^{(k)}(x+1)}{1-(s-t)^2}\ge0
$$
for nonnegative integer $k$. Further, from
$\lim_{x\to\infty}\Theta_{s,t}^{(k)}(x)=0$ for nonnegative integer $k$, which
can be deduced by utilizing~\eqref{psi-int-expr} and~\eqref{psin}, and Lemma
\ref{elbert-laforgia-lem}, it is concluded that
$\frac{(-1)^k\Theta_{s,t}^{(k)}(x)}{1-(s-t)^2}\ge0$ for any nonnegative
integer $k$. This implies $(-1)^k\Theta_{s,t}^{(k)}(x)\gtreqless0$ if and only
if $\abs{t-s}\lessgtr1$. Therefore, the functions $\Theta_{s,t}(x)$ for
$\abs{t-s}<1$ and $-\Theta_{s,t}(x)$ for $\abs{t-s}>1$ are completely
monotonic on $(-\alpha,\infty)$.
\par
Since $\Theta_{s,t}(x)=(t-s)^2\Delta_{s,t}(x)$, the function $\Delta_{s,t}(x)$
has the same monotonicity property as $\Theta_{s,t}(x)$ on $(-\alpha,\infty)$.
The proof of Theorem \ref{byproduct-1} is complete.
\end{proof}

\begin{proof}[Proof of Theorem \ref{egp-mon}]
It is clear from~\eqref{z''-delta} and
\eqref{z''-theta} that
\begin{equation}\label{z''}
z''_{s,t}(x)=[z_{s,t}(x)+x]\Delta_{s,t}(x)
=\frac{{z_{s,t}(x)+x}}{(t-s)^2}\Theta_{s,t}(x)
\end{equation}
for $t\ne s$. By Theorem \ref{byproduct-1}, it is easy to see that
$\Theta_{s,t}(x)\gtreqless0$ and $\Delta_{s,t}(x)\gtreqless0$ in
$(-\alpha,\infty)$ if and only if $\abs{t-s}\lessgtr1$. Then
$z''_{s,t}(x)\gtreqless0$ for $\abs{t-s}\lessgtr1$ follows from formula
\eqref{z''}. The convexity and concavity of the function $z_{s,t}(x)$ is
proved.
\par
The inequality~\eqref{qi-gam-ineq} is equivalent to
\begin{equation*}
\max\big\{e^{\psi(s)},e^{\psi(r)}\big\}
>\bigg[\frac{\Gamma(s)}{\Gamma(r)}\bigg]^{1/(s-r)}
>\min\big\{e^{\psi(s)},e^{\psi(r)}\big\}
\end{equation*}
for any positive numbers $s>0$ and $t>0$. This implies
\begin{equation}\label{z'>}
\begin{split}
z'_{s,t}(x)&=\bigg[\dfrac{\Gamma(x+t)}{\Gamma(x+s)}\bigg]^{1/(t-s)} \frac{\psi(x+t)-\psi(x+s)}{t-s}-1\\
&<e^{\psi(x+t)}\frac{\psi(x+t)-\psi(x+s)}{t-s}-1\\
&=e^{\psi(x+t)}\psi'(x+\xi)-1\\
&<\psi'(x+t)e^{\psi(x+t)}-1
\end{split}
\end{equation}
and
\begin{equation}\label{z'<}
\begin{split}
z'_{s,t}(x)&>e^{\psi(x+s)}\frac{\psi(x+t)-\psi(x+s)}{t-s}-1\\*
&=e^{\psi(x+s)}\psi'(x+\xi)-1\\
&>\psi'(x+s)e^{\psi(x+s)}-1
\end{split}
\end{equation}
if assuming $t>s>0$ without loss of generality, where $\xi\in(s,t)$.
\par
By the inequality~\eqref{psi-ineq}, we obtain
\begin{equation}
x\psi'(x)e^{-1/x}<\psi'(x)e^{\psi(x)}<x\psi'(x)e^{-1/2x}
\end{equation}
for $x>0$. Using the asymptotic representation~\eqref{psi1-asym} yields
\begin{equation}
\lim_{x\to\infty}\big[x\psi'(x)e^{-1/x}\big]=1\quad \text{and} \quad
\lim_{x\to\infty}\big[x\psi'(x)e^{-1/2x}\big]=1.
\end{equation}
Hence,
\begin{equation}\label{lim=1}
\lim_{x\to\infty}\big[\psi'(x)e^{\psi(x)}\big]=1.
\end{equation}
Combining~\eqref{lim=1} with~\eqref{z'>} and~\eqref{z'<} leads to
\begin{gather*}
\lim_{x\to\infty}z'_{s,t}(x)\le
\lim_{x\to\infty}\big[\psi'(x+t)e^{\psi(x+t)}\big]-1
=\lim_{x+t\to\infty}\big[\psi'(x+t)e^{\psi(x+t)}\big]-1=0\intertext{and}
\lim_{x\to\infty}z'_{s,t}(x)\ge
\lim_{x\to\infty}\big[\psi'(x+s)e^{\psi(x+s)}\big]-1
=\lim_{x+s\to\infty}\big[\psi'(x+s)e^{\psi(x+s)}\big]-1=0.
\end{gather*}
Thus, it is concluded that $\lim_{x\to\infty}z'_{s,t}(x)=0$.
\par
Since $z''_{s,t}(x)\gtreqless0$ on $x\in(-\alpha,\infty)$ for
$\abs{t-s}\lessgtr1$, then the function $z'_{s,t}(x)$ is increasing/decreasing
on $x\in(-\alpha,\infty)$ for $\abs{t-s}\lessgtr1$. Thus, it follows that
$z'_{s,t}(x)\lesseqgtr0$ and $z_{s,t}(x)$ is decreasing/increasing in
$x\in(-\alpha,\infty)$ for $\abs{t-s}\lessgtr1$. The monotonicity of the
function $z_{s,t}(x)$ is proved.
\end{proof}

\begin{proof}[The second proof of convexity of $z_{s,t}(x)$]
It is sufficient to show the function
\begin{equation}
\Phi_{s,t}(x)=
\begin{cases}
\dfrac{\psi(x+s)-\psi(x+t)}{s-t} \biggl[\dfrac{\Gamma(x+s)}{\Gamma(x+t)}\biggr]^{1/(s-t)},& s\ne t\\[1em]
\psi'(x+s)e^{\psi(x+s)},& s=t
\end{cases}
\end{equation}
on $(-\alpha,\infty)$ is increasing for $\abs{t-s}<1$ and decreasing for $\abs{t-s}>1$.
\par
Straightforward calculation yields
\begin{equation*}
\ln\Phi_{s,t}(x)=
\begin{cases}
\ln\dfrac{\psi(x+s)-\psi(x+t)}{s-t} +\dfrac{\ln\Gamma(x+s)-\ln\Gamma(x+t)}{s-t},& s\ne t\\
\ln\psi'(x+s)+\psi(x+s),& s=t
\end{cases}
\end{equation*}
and
\begin{align*}
[\ln\Phi_{s,t}(x)]'&=
\begin{cases}
\dfrac{\psi'(x+s)-\psi'(x+t)}{\psi(x+s)-\psi(x+t)} +\dfrac{\psi(x+s)-\psi(x+t)}{s-t},& s\ne t\\[1em]
\dfrac{\psi''(x+s)}{\psi'(x+s)}+\psi'(x+s),& s=t
\end{cases}\\
&=\begin{cases}
\dfrac{s-t}{\psi(x+s)+\psi(x+t)}\Delta_{s,t}(x),& s\ne t\\[1em]
\dfrac1{\psi'(x+s)}\Delta_{s,s}(x),& s=t.
\end{cases}
\end{align*}
In virtue of Theorem~\ref{byproduct-1}, it is concluded that
\begin{equation*}
[\ln\Phi_{s,t}(x)]'
\begin{cases}
>0,&\text{if $\abs{t-s}<1$,}\\
<0,&\text{if $\abs{t-s}>1$.}
\end{cases}
\end{equation*}
The second proof of convexity of $z_{s,t}(x)$ is complete.
\end{proof}

\begin{rem}
In~\cite{batir-interest, batir-interest-rgmia, batir-new, batir-new-rgmia, egp}, the inequalities
\begin{equation}\label{psi-psi-e}
\psi'(x)e^{\psi(x)}<1,\quad x>0
\end{equation}
and~\eqref{positivity} were proved and used to construct many inequalities for bounding the gamma function $\Gamma(x)$, the psi function $\psi$ and the trigamma function $\psi'(x)$ such as~\eqref{batir-alzer-grinshpan-ineq}.
\par
In~\cite{egp}, as a corollary of Theorem \ref{egp-mon}, the following inequality was deduced:
\begin{equation}\label{psipsi}
\bigg[\dfrac{\Gamma(x+t)}{\Gamma(x+s)}\bigg]^{1/(t-s)} <\frac{t-s}{\psi(x+t)-\psi(x+s)}
\end{equation}
holds for $0<\abs{t-s}<1$ and with reversed sign if $\abs{t-s}>1$.
\par
The second proof of convexity of $z_{s,t}(x)$ implies that inequalities~\eqref{positivity},~\eqref{psi-psi-e} and~\eqref{psipsi} are equivalent to each other.
\end{rem}

\begin{rem}
It is conjectured that the function $\Phi_{s,t}(x)$ for $\abs{t-s}>1$ and its reciprocal for $\abs{t-s}<1$ are logarithmically completely monotonic on $x\in(-\alpha,\infty)$, which modified an open problem posed in~\cite{notes-best.tex, notes-best.tex-rgmia}.
\end{rem}

\begin{proof}[Proof of Theorem~\ref{wallis-ineq-thm}]
Let the sequence $\theta_1(n)$ be defined by
\begin{equation}
\frac{(2n-1)!!}{(2n)!!} =\frac1{\sqrt{\pi(n+\theta_1(n))}}
\end{equation}
for $n\in\mathbb{N}$. In order to obtain the inequality~\eqref{best bounds
Wallis}, it is sufficient to show
\begin{equation}\label{theta-1}
\frac14<\theta_1(n)\le\frac{4}{\pi}-1
\end{equation}
for all $n\in\mathbb{N}$. Indeed, formula~\eqref{cosine-formula} and Theorem
\ref{egp-mon} implies
\begin{equation}\label{theta-dfn-1}
\theta_1(x)=\biggl[\frac{\Gamma(x+1)}{\Gamma(x+1/2)}\biggr]^2-x=z_{1/2,1}(x)
\end{equation}
 is convex and decreasing in
$\left(-\frac12,\infty\right)$, and then the sharp double inequality
\eqref{theta-1} can be deduced by observing that $\theta_1(1)=\frac4\pi-1$ and
$\lim_{x\to\infty}\theta_1(x)=\frac14$.
\par
Let the sequence $\theta_2(n)$ be defined by
\begin{equation}
\frac{(2n)!!}{(2n+1)!!} =\frac{\sqrt{\pi}}{2\sqrt{n+\theta_2(n)}}
\end{equation}
for $n\in\mathbb{N}$. In order to obtain the inequality~\eqref{Wallis-type ineq},
it is sufficient to show
\begin{equation}\label{theta-2}
\frac34<\theta_2(n)\le\frac{9\pi}{16}-1
\end{equation}
for all $n\in\mathbb{N}$. Indeed, formula~\eqref{cosine-formula} and Theorem
\ref{egp-mon} implies
\begin{equation}\label{theta-dfn-2}
\theta_2(x)=\left[\frac{\Gamma(x+3/2)}{\Gamma(x+1)}\right]^2-x=z_{1,3/2}(x)
\end{equation}
 is convex and decreasing in
$(-1,\infty)$, and then the sharp double inequality~\eqref{theta-2} can be
deduced by observing that $\theta_2(1)=\frac{9\pi}{16}-1$ and
$\lim_{x\to\infty}\theta_2(x)=\frac34$.
\par
The rest is the same as the proof of~\cite[Theorem~1.2]{wallis-cao}.
\end{proof}

\begin{proof}[Proof of Theorem~\ref{alzer-ball-thm-2}]
The inequality~\eqref{alzer-ball-3.8} can be rearranged as
\begin{equation}\label{a/2-b/2}
\frac{A}2<z_{1,1/2}\left(\frac{n}2\right)\le\frac{B}2
\end{equation}
for $n\in\mathbb{N}$. Since $z_{1,1/2}(x)$ is decreasing in
$\left(-\frac12,1\right)$ by Theorem~\ref{egp-mon}, considering
$z_{1,1/2}\left(\frac12\right)=\frac\pi4-\frac12$ and
$\lim_{x\to\infty}z_{1,1/2}(x)=\frac14$, then the inequality~\eqref{a/2-b/2} is
concluded.
\end{proof}

\begin{proof}[Proof of Theorem~\ref{open-solve}]
Straightforward computation yields
\begin{equation}
g'_{s,t}(x)=\begin{cases}\displaystyle
\frac1{t-s} \ln\biggl[\frac{\Gamma(x+t)}{\Gamma(x+s)} \frac{\Gamma(c+s)}{\Gamma(c+t)}\biggr],&s\ne t;\\[1em]
\displaystyle
\psi(x+s)-\psi(c+s),&s=t.
\end{cases}
\end{equation}
It is clear that $g_{s,t}(c)=g_{s,t}'(c)=0$ and
\begin{equation*}
g''_{s,t}(x)=\begin{cases}\displaystyle
\frac{\psi(x+t)-\psi(x+s)}{t-s}>0,&s\ne t;\\
\psi'(x+s)>0,&s=t.
\end{cases}
\end{equation*}
This leads to $(x-c)g'_{s,t}(x)>0$ and $g_{s,t}(x)>0$ for $x\ne c$.
\par
Differentiation yields
\begin{equation*}
f_{s,t}'(x)=\frac{g'_{s,t}(x)h_{s,t}(x)}{\{[g'_{s,t}(x)-1]\exp[g'_{s,t}(x)]+1\}^2},
\end{equation*}
where
\begin{equation*}
h_{s,t}(x)=[g'_{s,t}(x)-g_{s,t}(x)g''_{s,t}(x)-1]\exp[g'_{s,t}(x)]+1.
\end{equation*}
Since $h_{s,t}(c)=0$, $[g_{s,t}''(x)]^2+g_{s,t}'''(x)>0$ by Theorem~\ref{byproduct-1} and
\begin{equation*}
h'_{s,t}(x)=-g_{s,t}(x)\bigl\{[g''_{s,t}(x)+g'''_{s,t}(x)]^2\bigr\} \exp[g'_{s,t}(x)]\lessgtr0
\end{equation*}
for $|s-t|\lessgtr1$ and $x\ne c$, we obtain $(x-c)h_{s,t}(x)\lessgtr0$ and hence $f'_{s,t}(x)\lessgtr0$ for $|s-t|\lessgtr1$ and $x\ne c$.
\end{proof}

\end{document}